\theoremstyle{plain}
\newtheorem{theorem}[equation]{Theorem}
\theoremstyle{definition}
\theoremstyle{remark}
\numberwithin{equation}{section}
\newcommand{\R}{{\mathbb R}}
\def\mean#1{\mathchoice%
          {\mathop{\kern 0.2em\vrule width 0.6em height 0.69678ex depth -0.58065ex
                  \kern -0.8em \intop}\nolimits_{\kern -0.4em#1}}%
          {\mathop{\kern 0.1em\vrule width 0.5em height 0.69678ex depth -0.60387ex
                  \kern -0.6em \intop}\nolimits_{#1}}%
          {\mathop{\kern 0.1em\vrule width 0.5em height 0.69678ex
              depth -0.60387ex
                  \kern -0.6em \intop}\nolimits_{#1}}%
          {\mathop{\kern 0.1em\vrule width 0.5em height 0.69678ex depth -0.60387ex
                  \kern -0.6em \intop}\nolimits_{#1}}}
\DeclareMathOperator{\diam}{diam}
\def\div{\mathop{\operatorname{div}}\nolimits}
\begin{document}
\title[Extension properties and boundary estimates for a fractional heat operator]{Extension properties and boundary estimates\\ for a fractional heat operator}

\address{Kaj Nystr\"{o}m\\Department of Mathematics, Uppsala University\\
S-751 06 Uppsala, Sweden}
\email{kaj.nystrom@math.uu.se}

\address{Olow Sande\\Department of Mathematics, Uppsala University\\
S-751 06 Uppsala, Sweden}
\email{olow.sande@math.uu.se}

\author{K. Nystr{\"o}m, O. Sande}

\maketitle
\begin{abstract}
\noindent
The square root of the heat operator $\sqrt{\partial_t-\Delta}$, can be realized as the Dirichlet to Neumann map of the heat extension of data on
 $\mathbb R^{n+1}$ to $\mathbb R^{n+2}_+$. In this note we obtain similar characterizations for general fractional powers of the heat operator,
 $(\partial_t-\Delta)^s$, $s\in (0,1)$. Using the characterizations we derive properties and boundary estimates for parabolic integro-differential equations from purely local arguments in the extension problem.
 \\

\noindent
2000  {\em Mathematics Subject Classification.}
\noindent

\medskip

\noindent
{\it Keywords and phrases: fractional Laplacian, fractional heat equation, Dirichlet to Neumann map, parabolic integro-differential equations, linear degenerate
parabolic equation, Lipschitz domain, boundary Harnack inequality.}
\end{abstract}

    \section{Introduction}
    In recent years there has been a surge in the study of the fractional Laplacian $(-\Delta)^s$ as well as more general linear and non-linear fractional operators. From an applied perspective a natural parabolic extension of $(-\Delta)^s$ is the parabolic operator $\partial_t+(-\Delta)^s$ which appears, for example, in the study of stable processes and in option pricing models, see \cite{CFi} an the references therein. An other generalization is the time-fractional diffusion equation $\partial_t^\beta+(-\Delta)^s$ being the sum of a fractional and non-local time-derivative as well as a non-local operator in space as well. This type of equations has attracted considerable  interest during the last years, mostly due to their applications in the modeling
of anomalous diffusion, see \cite{ACV}, \cite{KSVZ}, \cite{KSZ}, and  the references therein. Decisive progress in the study of the fine properties of solutions to $(-\Delta)^su=0$ has been achieved through an extension technique, rediscovered in \cite{CSi}, based on which the fractional Laplacian can be study through a local but degenerate elliptic operator having degeneracy determined by an $A_2$-weight. The latter operators have been thoroughly studied in \cite{FKS}, \cite{FJK}, \cite{FJK1}, as well as in several other subsequent papers.
Due to the lack of an established extension technique for operators of the forms
    $\partial_t+(-\Delta)^s$, $\partial_t^\beta+(-\Delta)^s$, more modest, but still important, progress has been made concerning these equations, again see
     \cite{CFi}, \cite{ACV}, \cite{KSVZ}, \cite{KSZ}, and the references therein.

In this note we take a different approach by considering directly the fractional heat operator  $(\partial_t-\Delta)^s$. Given $s\in (0,1)$ we introduce the fractional heat operator $(\partial_t-\Delta)^s$ defined on the Fourier transform side by  multiplication with the multiplier
\[
(|\xi|^2-i\tau)^s.
\]
Using  \cite{Sa} it follows that $(\partial_t-\Delta)^s$ can be realized as a parabolic hypersingular integral,
     \begin{eqnarray*}\label{aa}
     (\partial_t-\Delta)^sf(x,t)=\frac {1}{\Gamma(-s)}\int_{-\infty}^t\int_{\mathbb R^n}\frac {(f(x,t)-f(x',y'))}{(t-t')^{1+s}}W(x-x',t-t')\, dx'dt',
     \end{eqnarray*}
where $W(x,t)=(4\pi t)^{-n/2}\exp(-|x|^2/(4t))$
for $t>0$
and where $\Gamma(-s)$ is the gamma function evaluated at $-s$. The main result established in this note is
that, in analogy with \cite{CSi}, fine properties of solutions to  $(\partial_t-\Delta)^sf=0$ can be derived through an extension technique based on which the fractional heat operator can be studied through a local but degenerate parabolic operator having degeneracy determined by an $A_2$-weight. To be precise, we consider a specific extension to the upper half space
\[
\mathbb R^{n+2}_+=\{(X,t)=(x,x_{n+1},t)\in \mathbb R^{n}\times\mathbb R\times\mathbb R:\ x_{n+1}>0\},
\]
having boundary
\[
\mathbb R^{n+1}=\{(x,x_{n+1},t)\in \mathbb R^{n}\times\mathbb R\times\mathbb R:\ x_{n+1}=0\}.
\]
In the following we let $\nabla =(\nabla_x,\partial_{x_{n+1}})$ an we let $\div$ be the associated divergence operator. Let $a=1-2s$. Letting
\begin{eqnarray}\label{aah}\Gamma_{x_{n+1}}(x,t):=\frac {1}{4^s\Gamma(-s)}x_{n+1}^{1-a}\frac {1}{t^{1+s}}W(x,t)\exp(-|x_{n+1}|^2/(4t))
\end{eqnarray}
whenever $(x,x_{n+1},t)\in \mathbb R^{n+2}_+$ and $t>0$,
we introduce, given $a$ and $f\in C_0^\infty(\mathbb R^{n+1})$, the function
 \begin{eqnarray}\label{aa+}
 u(X,t)=u(x,x_{n+1},t)=\int_{-\infty}^t\int_{\mathbb R^n}f(x',t')\Gamma_{x_{n+1}}(x-x',t-t')\, dx'dt'.
     \end{eqnarray}
     Given $ (x,t) \in \mathbb R^{n+1}$ and $r >0$,
let $B(x,r)$ denote the standard Euclidean ball and let $C_r(x,t)$ denote the standard parabolic cylinder
\[
C_r (x,t)=B(x,r)\times (t-r^2, t+r^2).
\]
Our first result is the following theorem.
     \begin{theorem}\label{thm1} Consider $s$, $0<s<1$, fixed and let $a=1-2s$. Consider $f\in C_0^\infty(\mathbb R^{n+1})$ and let
      $u$ be defined as in \eqref{aa+}. Then $u$ solves
      \begin{eqnarray}\label{aa++b+a}
            {x_{n+1}}^a\partial_tu(X,t)-\div({x_{n+1}}^a\nabla u(X,t))&=&0,\quad\quad (X,t)\in\mathbb R^{n+2}_+,\notag\\
            u(x,0,t)&=&f(x,t),\ (x,t)\in\mathbb R^{n+1},
\end{eqnarray}
and
      \begin{eqnarray*}\label{aa++q+}
      {x_{n+1}}^a\partial_{x_{n+1}} u(X,t)\biggl |_{x_{n+1}=0}=-\lim_{{x_{n+1}}\to 0} 4^{s}\frac {u(X,t)-u(x,0,t)}{{x_{n+1}}^{1-a}}=(\partial_t-\Delta)^sf(x,t).
      \end{eqnarray*}
      Furthermore, assume that $(\partial_t-\Delta)^sf(x,t)=0$ whenever $(x,t)\in C_r(\tilde x,\tilde t)$, for some $(\tilde x,\tilde t)\in\mathbb R^{n+1}$, $r>0$, let $\tilde u(x,x_{n+1},t)$ be defined to equal
      $u(x,x_{n+1},t)$ whenever $x_{n+1}\geq 0$ and defined to equal
      $u(x,-x_{n+1},t)$ whenever $x_{n+1}<0$. Then $\tilde u$ is a weak solution to the equation
      \begin{eqnarray*}\label{aa++b+auu}
            {|x_{n+1}|}^a\partial_t\tilde u(X,t)-\div({|x_{n+1}|}^a\nabla \tilde u(X,t))=0,
\end{eqnarray*}
in $\{(X,t)=(x,x_{n+1},t)\in\mathbb R^{n+2}: (x,t)\in C_r(\tilde x,\tilde t),\ x_{n+1}\in (-1,1)\}$.
\end{theorem}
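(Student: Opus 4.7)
The plan is to split the proof into three parts: (i) the degenerate parabolic equation for $u$ in $\mathbb R^{n+2}_+$; (ii) the Dirichlet and weighted Neumann trace identities on $\{x_{n+1}=0\}$; (iii) the reflection statement, which follows from (ii).

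For (i), I verify directly that the kernel $\Gamma_{x_{n+1}}(x,t)$ satisfies the degenerate equation in $\mathbb R^{n+2}_+$. Dividing by $x_{n+1}^a$, the equation becomes $\partial_t\Gamma_{x_{n+1}}-\Delta\Gamma_{x_{n+1}}-(a/x_{n+1})\partial_{x_{n+1}}\Gamma_{x_{n+1}}=0$. Writing
\[
\Gamma_{x_{n+1}}(x,t)=\frac{1}{4^s\Gamma(-s)}\,x_{n+1}^{1-a}\,K(X,t),\quad K(X,t):=t^{-(1+s)}(4\pi t)^{-n/2}\exp\!\bigl(-(|x|^2+x_{n+1}^2)/(4t)\bigr),
\]
one computes $\partial_tK-\Delta K=-(1-a/2)K/t$ and $\partial_{x_{n+1}}K=-(x_{n+1}/(2t))K$; substituting into the operator above and using $1-a=2s$, all terms cancel. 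The PDE for $u$ then follows by differentiation under the integral in \eqref{aa+}, which is justified because $f\in C_0^\infty(\mathbb R^{n+1})$ and $\Gamma_{x_{n+1}}(\cdot,\cdot)$ is Schwartz in $(x,t)$ for each fixed $x_{n+1}>0$.

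For (ii), I move to the Fourier side in $(x,t)$. Since $u$ is a space-time convolution, $\hat u(\xi,x_{n+1},\tau)=\hat f(\xi,\tau)\,m(x_{n+1},z)$ with $z=|\xi|^2-i\tau$ (principal branch so $\operatorname{Re}\sqrt z>0$). Substituting the explicit form of $\Gamma_{x_{n+1}}$ and applying the classical identity
\[
\int_0^\infty \sigma^{-\nu-1}e^{-A\sigma-B/\sigma}\,d\sigma=2(A/B)^{\nu/2}K_\nu(2\sqrt{AB})
\]
with $\nu=s$, $A=z$, $B=x_{n+1}^2/4$ yields $m(x_{n+1},z)=c_s(x_{n+1}\sqrt z)^{s}K_{s}(x_{n+1}\sqrt z)$, where $K_s$ is the modified Bessel function of the second kind. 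The small-argument expansion $r^s K_s(r)=2^{s-1}\Gamma(s)+2^{-s-1}\Gamma(-s)\,r^{2s}+O(r^{2})+O(r^{2+2s})$ shows $m(x_{n+1},z)\to 1$ as $x_{n+1}\to 0^+$ (after choosing $c_s$ compatibly with the prefactor $1/(4^s\Gamma(-s))$), producing $u(x,0,t)=f(x,t)$. The next-order $x_{n+1}^{2s}z^s$ term yields, after dividing by $x_{n+1}^{1-a}=x_{n+1}^{2s}$ and inverting the Fourier transform, the multiplier $(|\xi|^2-i\tau)^s$, so that the middle limit equals $-(\partial_t-\Delta)^sf(x,t)$ up to the stated factor $4^s$. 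The identity $x_{n+1}^a\partial_{x_{n+1}}u|_{x_{n+1}=0}=(\partial_t-\Delta)^sf$ is then obtained by applying $\partial_{x_{n+1}}$ to the same expansion of $m$ via the recurrence $(r^sK_s(r))'=-r^sK_{s-1}(r)$ and the small-argument behavior of $K_{s-1}$.

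For (iii), once we know $x_{n+1}^a\partial_{x_{n+1}}u|_{x_{n+1}=0}=(\partial_t-\Delta)^sf$, the hypothesis $(\partial_t-\Delta)^sf\equiv 0$ on $C_r(\tilde x,\tilde t)$ forces the weighted normal derivative of $u$ to vanish on that set. The even reflection $\tilde u(x,x_{n+1},t):=u(x,|x_{n+1}|,t)$ is continuous across $\{x_{n+1}=0\}$, and because both $|x_{n+1}|^a\partial_t$ and $\div(|x_{n+1}|^a\nabla\cdot)$ are invariant under $x_{n+1}\mapsto-x_{n+1}$, testing against $\varphi\in C_0^\infty$ supported in a neighborhood of the indicated cylinder and integrating by parts on each side of $\{x_{n+1}=0\}$ produces boundary integrals which cancel exactly because of the vanishing trace. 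Hence $\tilde u$ is a weak solution of the symmetric equation in the stated region. The main obstacle is part (ii): carefully tracking the constants $\Gamma(\pm s)$ and powers of $2$ coming from the Bessel expansion together with the prefactor $1/(4^s\Gamma(-s))$ in the definition of $\Gamma_{x_{n+1}}$, so that the factor $4^s$ and the symbol $(|\xi|^2-i\tau)^s$ emerge with exactly the correct signs.
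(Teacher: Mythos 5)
Your argument is correct in all three parts, but for part (ii) it takes a genuinely different route from the paper, so it is worth spelling out the comparison. For part (i), both you and the paper verify by direct differentiation that the kernel $\Gamma_{x_{n+1}}$ solves the degenerate equation; your factorization through $K$ and the identity $\partial_tK-\Delta K=-(1-a/2)K/t$ is a slightly more streamlined version of the paper's term-by-term bookkeeping, and the cancellation does work out exactly as you say. For part (iii), your reflection argument via integration by parts on each side of $\{x_{n+1}=0\}$ with boundary terms controlled by the vanishing weighted conormal derivative is essentially the paper's $I_1+I_2$ decomposition with an $\epsilon$-cutoff, so no new idea is needed there (the paper also records the $A_2$-integrability step that makes the near-boundary piece $I_2$ negligible, which you implicitly use).

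Where you diverge from the paper is part (ii). The paper works entirely in physical space: it first shows $\int\!\!\int \Gamma_{x_{n+1}}\,dx\,dt$ is a constant independent of $x_{n+1}$, so $\Gamma_{x_{n+1}}$ acts as an approximate identity and gives the Dirichlet trace $u\to f$; it then computes the Neumann/difference-quotient limit by inserting the normalization into $u(X,t)-u(x,0,t)$ and passing to the limit under the integral, recovering directly Samko's hypersingular representation of $(\partial_t-\Delta)^sf$. You instead pass to the Fourier side, obtaining $\hat u=\hat f\cdot m(x_{n+1},z)$ with $m$ expressed through $r^sK_s(r)$, and read off both the Dirichlet condition and the conormal trace from the small-argument Bessel expansion. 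Your approach makes the multiplier $(|\xi|^2-i\tau)^s$ and the role of the branch cut transparent, and it recovers the Neumann condition and the difference-quotient characterization from a single two-term asymptotic expansion; the cost is reliance on the classical Bessel integral and expansion, whereas the paper's route is self-contained given the hypersingular integral definition. The constant bookkeeping you flag as the main obstacle is real: the Bessel expansion produces $\Gamma(s)$ and $\Gamma(-s)$ and various powers of $2$ that must be matched against the prefactor $4^{-s}/\Gamma(-s)$ in \eqref{aah}, and you would need to carry that computation to completion to confirm the precise factor of $4^s$ in the theorem (the paper's own constant tracking at this step is rather terse as well). None of this affects the correctness of your overall strategy.
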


      Note that Theorem \ref{thm1} is in line with the fact that $\sqrt{\partial_t-\Delta}f$ can be realized as the Dirichlet to Neumann map of the heat extension of $f$ to $\mathbb R^{n+2}_+$. We also note that a nice feature of Theorem \ref{thm1}   is that if $f$ is independent of $t$ then all of the above objects coincide with the corresponding objects appearing in the study of the fractional Laplacian through the extension technique considered in \cite{CSi}.

      Based on Theorem \ref{thm1} we can derive refined estimates for non-negative solutions to
      $(\partial_t-\Delta)^sf(x,t)=0$ in a domain by establishing the corresponding estimates for non-negative solutions to the equation
            \begin{eqnarray}\label{aa++b+auu1}
            {|x_{n+1}|}^a\partial_t\tilde u(X,t)-\div({|x_{n+1}|}^a\nabla \tilde u(X,t))=0,
\end{eqnarray}in a natural extended domain. Let in the following $\Omega_T=\Omega\times (0,T)$, $T>0$, where $\Omega\subset\mathbb R^n$ is a bounded domain, i.e., a bounded, connected and open set in $\mathbb R^n$.
Let the parabolic boundary of the cylinder $\Omega_T$, $\partial_p\Omega_T$, be defined as
\[
\partial_p\Omega_T = S_T \cup (\bar \Omega \times \{0\}),\ \ \ S_T = \partial \Omega \times [0,T).
\]
Recall that  $\Omega\subset\mathbb R^{n}$ is a bounded Lipschitz domain if there exists a finite set
of balls $\{B(\hat x_i,r_i)\}$,
with $\hat x_i\in \partial\Omega$ and $r_i>0$,
such that $\{B(\hat x_i,r_i)\}$ constitutes a covering of an open neighborhood of
$\partial\Omega$ and such that, for each $i$,
\begin{eqnarray}\label{Lip}
\Omega\cap B(\hat x_i,r_i)&=&\{y=(y',y_n)\in\mathbb R^{n} : y_n > \phi_i ( y')\}\cap B(\hat x_i,r_i),\notag
\\
\partial\Omega\cap B(\hat x_i,r_i)&=&\{y=(y',y_n)\in\mathbb R^{n} : y_n= \phi_i ( y')\}\cap B(\hat x_i,r_i),
\end{eqnarray}
in an appropriate coordinate system and for a Lipschitz function $\phi_i : \mathbb R^{n-1} \to \mathbb R.
$ The Lipschitz constants of $\Omega$ are defined to be $M=\max_i\| \,|\nabla\phi_i| \, \|_\infty$,
$r_0:=\min_i r_i$ and we will often refer to $\Omega$ as a Lipschitz domain  with parameters $M$ and $r_0$.
If $\Omega$ is a Lipschitz domain  with parameters $M$ and $r_0$, then there exists,
for any $\hat x\in\partial\Omega$,
$0<r<r_0$, a point $A_r(\hat x)\in\Omega$, such that
\[
M^{-1}r<d(\hat x,A_r(\hat x))<r,\ \ \text{and}\ \
d(A_r(\hat x),\partial\Omega)\geq M^{-1}r.
\]

We let $\diam(\Omega) = \sup\{|x-y|\mid x,y\in\Omega\}$ denote the Euclidean diameter of $\Omega$.
When we in the following write that a constant $c$ depends on the operator $\mathcal{H}$, $c=c(\mathcal{H})$,  we mean that $c$ depends on the dimension $n$, and $s$. Our second result is the following theorem.

\begin{theorem}\label{T:quotients}
Let $\Omega\subset\mathbb R^n$ be a bounded
Lipschitz domain
with parameters $M$, $r_0$
and let $\Omega_T=\Omega\times (0,T)$ for some $T>0$.
Let $f_1, f_2$ be non-negative
solutions of $(\partial_t-\Delta)^{s}f=0$ in $\Omega_T$
vanishing continuously on $S_T$. Let $\delta$, $0<\delta<r_0$, be a fixed constant.
Then $f_1/f_2$ is H{\"o}lder continuous
on the closure of $\Omega\times(\delta^2,T]$.
Furthermore, let $(x_0,t_0)\in S_T$, $\delta^2\leq t_0$, and assume that $r<\delta/2$.
Then there exist  $c=c(n,s,M,\diam(\Omega),T,\delta)$, $1\leq c<\infty$,
and $\alpha=\alpha(n,s,M,\diam(\Omega),T,\delta)$, $\alpha\in (0,1)$, such that
\[
\biggl |\frac { f_1 ( x,t ) }{ f_2 ( x,t ) }-\frac { f_1 ( y,s ) }{ f_2 ( y,s ) }\biggr |
\leq c\biggl ( \frac{|x-y|+|s-t|^{1/2}}{r} \biggr)^\alpha
\frac {f_1\bigl( A_r(x_0),t_0\bigr) }{ f_2 \bigl(A_r(x_0),t_0\bigr) }
\]
whenever $(x,t), (y,s)\in \Omega_T\cap C_{r/4}(x_0,t_0)$.
\end{theorem}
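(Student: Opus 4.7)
The plan is to reduce the statement to a boundary Harnack principle for the local degenerate parabolic operator on a Lipschitz cylinder. First, for each $i=1,2$, I would use Theorem~\ref{thm1} to produce the extension $u_i$ on $\Omega_T\times(0,1)$ (in the $x_{n+1}$ variable) of the given $f_i$ that satisfies the weighted parabolic equation with weight $x_{n+1}^a$ and has $f_i$ as its Dirichlet trace on $\{x_{n+1}=0\}$. Because $(\partial_t-\Delta)^s f_i\equiv 0$ in $\Omega_T$, Theorem~\ref{thm1} forces the weighted conormal derivative of $u_i$ on $\Omega_T\times\{0\}$ to vanish, so the even reflection $\tilde u_i$ across $\{x_{n+1}=0\}$ is a non-negative weak solution of
\[
|x_{n+1}|^a\partial_t\tilde u_i-\div(|x_{n+1}|^a\nabla\tilde u_i)=0
\]
on $\tilde\Omega_T:=\tilde\Omega\times(0,T)$, where $\tilde\Omega:=\Omega\times(-1,1)\subset\mathbb R^{n+1}$ is a Lipschitz domain whose Lipschitz constants are controlled by those of $\Omega$.

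Next, I would verify that $\tilde u_i$ vanishes continuously on the lateral boundary $\partial\Omega\times(-1,1)\times[0,T)$; this is inherited from the vanishing of $f_i$ on $S_T$ through standard estimates on the Poisson-type kernel $\Gamma_{x_{n+1}}$ of \eqref{aa+}. The weight $|x_{n+1}|^a$ with $a=1-2s\in(-1,1)$ is an $A_2$ weight on $\mathbb R^{n+1}$, so $\tilde u_1$ and $\tilde u_2$ are non-negative weak solutions of an equation that is uniformly parabolic with respect to the measure $|x_{n+1}|^a\,dX\,dt$, degenerating only on the hyperplane $\{x_{n+1}=0\}$, which lies in the interior of $\tilde\Omega_T$ and is transverse to its lateral Lipschitz boundary.

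Then I would invoke, for this $A_2$-degenerate parabolic operator on the Lipschitz cylinder $\tilde\Omega_T$, the boundary Harnack principle and H\"{o}lder continuity of quotients of non-negative solutions vanishing on a portion of the lateral boundary; such statements are available in the literature extending the Fabes--Kenig--Serapioni framework to the parabolic setting. Applied to $\tilde u_1, \tilde u_2$ at the extended boundary point $(x_0,0,t_0)$, this yields H\"{o}lder continuity of $\tilde u_1/\tilde u_2$ on a parabolic neighborhood in $\tilde\Omega_T$; restricting to $\{x_{n+1}=0\}$ and using $\tilde u_i(x,0,t)=f_i(x,t)$ recovers the stated estimate, with reference point $A_r(x_0)$ viewed as an interior corkscrew of $\tilde\Omega$ projecting to the corresponding corkscrew of $\Omega$.

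The main obstacle is the third step: locating or verifying a sharp $A_2$-degenerate parabolic boundary Harnack principle with quantitative control of the H\"{o}lder exponent and constants in terms only of $n$, $s$, the Lipschitz parameters $M, r_0$, and $\diam(\Omega), T, \delta$. A secondary subtlety is the careful treatment of the intersection $\partial\Omega\times\{0\}\times[0,T)$ of the Lipschitz lateral boundary of $\tilde\Omega_T$ with the degeneracy set $\{x_{n+1}=0\}$; one must ensure the boundary Harnack tools apply up to this edge, for instance by carrying out the argument on parabolic neighborhoods whose lateral face stays a definite distance from $\{x_{n+1}=0\}$ and then patching via interior Harnack.
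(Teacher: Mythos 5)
Your reduction is the same as the paper's: pass via Theorem \ref{thm1} to the reflected extensions $\tilde u_i$, which solve the degenerate equation in the Lipschitz cylinder $\tilde\Omega_T$, and deduce Theorem \ref{T:quotients} from a boundary H{\"o}lder-quotient estimate for non-negative solutions of that operator. The gap --- which you flag yourself as the ``main obstacle'' --- is that you invoke such a boundary Harnack principle as ``available in the literature.'' For this exact class of $A_2$-degenerate parabolic operators, with the weight $\lambda$ multiplying $\partial_t$ as well as entering the ellipticity, it was not readily available; as the paper itself notes, ``there is a limited literature devoted to the type of operators defined by $\mathcal H$.'' Supplying that machinery is the actual content of Section 3 (Theorem \ref{T:quotients.old}), and the paper assembles it from more primitive ingredients rather than citing a ready-made theorem. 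The recipe is: because the coefficients and the weight are time-independent, the Fabes--Garofalo--Salsa scheme \cite{FGS}, combined with the interior Harnack inequality of \cite{CS} and the Gaussian estimates of \cite{CUR}, produces the quotient estimate once two inputs are secured, namely (i) solvability of the continuous Dirichlet problem in $\tilde\Omega_T$ and (ii) H{\"o}lder continuity of solutions up to the lateral boundary where they vanish. For (i) the paper builds a barrier from the time-independent elliptic theory of \cite{FKS}: solving $\mathcal L\psi=-1$ with boundary data $|X-X_0|$ and setting $\psi(X,t)=\psi(X)+(t_0-t)$ gives $\mathcal H\psi=0$; this step uses the time-independence in an essential way. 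For (ii) the paper flattens the Lipschitz lateral boundary by a bi-Lipschitz change of variables, observes that the transported weight remains $A_2$ with constants depending only on $\Lambda$ and $M$, reflects the solution across the flattened boundary, and then invokes interior H{\"o}lder regularity from \cite{CS}, \cite{Is}.

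This also dissolves your ``secondary subtlety'' about the edge where the degeneracy set $\{x_{n+1}=0\}$ meets the Lipschitz lateral boundary: the boundary-regularity step uses only that the weight is $A_2$, a property preserved by bi-Lipschitz maps, so nothing special happens at that edge and there is no need to stay a definite distance from $\{x_{n+1}=0\}$ and patch via interior Harnack. One caution in the other direction: your claim that $\tilde u_i$ ``vanishes continuously on the lateral boundary'' for all $x_{n+1}\in(-1,1)$ by ``standard estimates on the Poisson-type kernel'' is not automatic, since the extension \eqref{aa+} is a global convolution and at fixed $x_{n+1}\neq 0$ it sees values of $f_i$ outside $\Omega_T$; the paper is also terse on this reduction step, and a complete argument needs a localization near $\tilde S_T$ rather than a direct kernel bound.
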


The weight
$\lambda(x,x_{n+1})=|x_{n+1}|^a$ is easily seen to be an $A_2$-weight on $\mathbb R^{n+1}$ and hence our extension operator in \eqref{aa++b+auu1} can be embedded into a larger class of operators of the form
  \begin{equation}\label{lop}
\mathcal{H}=\lambda(X)\partial_t-\sum_{i,j=1}^{n+1}\partial_{x_i}(a_{ij}(X)\partial_{x_j}),
\end{equation}
in $\mathbb R^{n+2}$,  where $A(X)=\{a_{ij}(X)\}$ is measurable, real, symmetric and
\begin{equation}\label{lop+}
\beta^{-1}\lambda(X)|\xi|^2\leq \sum_{i,j=1}^{n+1}a_{ij}(X)\xi_i\xi_j\leq\beta\lambda(X)|\xi|^2\mbox{ for all }(X,t)\in\mathbb R^{n+2},\ \xi\in\mathbb R^{n+1},
\end{equation}
for some constant $\beta\geq 1$ and for some non-negative and real-valued function $\lambda=\lambda(X)$ such that
\begin{equation}\label{aabb}
\lambda\in A_2(\R^{n+1}) \mbox{ with } A_2 \mbox{-constant } \Lambda.
\end{equation}
Based on  Theorem \ref{thm1}, the proof of Theorem \ref{T:quotients} follows from the corresponding  boundary  estimates which we establish for non-negative solutions to $\mathcal{H}u=0$, assuming  \eqref{lop}- \eqref{aabb},  in domains of the form $\tilde\Omega_T=\tilde\Omega\times (0,T)$ where \begin{equation}\label{dom}\tilde\Omega=\{X=(x,x_{n+1})\in \mathbb R^{n+1}:\ x\in\Omega,\ x_{n+1}\in (a,b)\},
\end{equation}
for some $a<b$. In this case we let $\tilde S_T = \partial \tilde \Omega \times [0,T)$ denote the lateral boundary of $\tilde\Omega_T$.  As it turns out, there is a limited literature devoted to the type of operators defined by $\mathcal{H}$. Indeed, in
\cite{CS} the (standard) parabolic Harnack inequality is established, in \cite{CUR} Gaussian estimates were established, and in \cite{Is} an estimate previously establish in the context of uniformly parabolic equations by Salsa \cite{S}, was generalized to the operators considered in this note. For an extensive study of equations as in \eqref{lop}, allowing also for time-dependent coefficients, without the presence of $\lambda$  on the left hand side, assuming \eqref{lop+} for a real-valued function $\lambda=\lambda(X)$
belonging to the Muckenhoupt class $A_{1+2/(n+1)}(\mathbb R^{n+1})$, we refer to \cite{NPS}.

The organization of this note is as follows. In Section 2 we prove Theorem \ref{thm1}. Using Theorem \ref{thm1} it follows that to prove Theorem \ref{T:quotients} it suffices to prove Theorem \ref{T:quotients.old} which is stated and proved in Section 3. Theorem \ref{T:quotients.old} concerns non-negative solutions to $\mathcal{H}u=0$ where $\mathcal{H}$ is of the form \eqref{lop}, assuming \eqref{lop+} and \eqref{aabb}. Note that $A=\{a_{i,j}\}$ is independent of $t$ and it is then easily seen, using \cite{CS} and  \cite{CUR}, that Theorem \ref{T:quotients.old} follows by the now classical arguments in \cite{FGS} once the continuous Dirichlet problem can be solved and once H{\"o}lder continuity up to the lateral boundary can be established.

Finally we note that while completing this note two related papers have been  posted on math arxiv. First, in \cite{BFe} the authors pursue ideas similar to ours by considering an extension problem associated to a fractional time-derivative. The extension problem is then a problem set in $\mathbb R\times \mathbb R_+$ and the extension operator considered is identical to ours in the case when we only consider the operator $\partial_t^s$. However, our set-up is different to \cite{BFe} in the sense that we consider the operator $(\partial_t-\Delta)^s$, and as we establish Theorem \ref{thm1} and Theorem \ref{T:quotients}. Second, in \cite{ST}, submitted to the arxiv on the very same day as we were to submit this note, the authors independently consider exactly the same set-up as we do but their focus is slightly different compared to ours. In particular, they develop a regularity theory for solutions to
\[
(\partial_t-\Delta)^sf(x,t)=h.
\]
This is done by using the same extension problem as we do in this note to characterize the nonlocal equation with a local
degenerate parabolic equation. Using this in \cite{ST} the authors prove a parabolic Harnack
inequality and a version of the result of Salsa \cite{Sa} referred to above. Subsequently, H{\"o}lder and Schauder estimates for the space-time Poisson problem are deduced via a characterization of parabolic H{\"o}lder spaces. The conclusion is that apparently there is an emerging interest in the community to study the operator $(\partial_t-\Delta)^s$ and we believe that this note and the contributions in \cite{BFe}, \cite{ST} all complement each other. In particular, Theorem \ref{T:quotients} is unique to this paper.

\section{Proof of Theorem \ref{thm1}}
Let $\Omega\subset\mathbb R^{n}$ be a bounded domain and let $\tilde\Omega$ be defined as in \eqref{dom}. Let in the following $\lambda\in A_{2}(\mathbb R^{n+1})$. Let $ L^2_\lambda ( \tilde\Omega )$ denote the
Hilbert space of functions defined on $\tilde\Omega$ which are square integrable
on $\tilde\Omega$ with respect to the measure $\lambda(X)dX$.
Let $ L^2_\lambda ( \tilde\Omega )$ be equipped with the natural weighted $L^2$-norm
$ \| \cdot \|_{L_\lambda^2 ( \tilde\Omega )} $.
Furthermore, let $ W^{1 ,2}_\lambda ( \tilde\Omega )$ be the space of equivalence
classes of functions $ u $ with distributional gradient
$ \nabla u = ( u_{x_1}, \dots, u_{x_{n+1}} ), $ both of which belong to $L^2_\lambda(\tilde\Omega)$.
Let
\begin{equation}
\| u \|_{ W^{1,2}_\lambda (\tilde\Omega)}
=
\| u \|_{ L^2_\lambda  (\tilde\Omega)} + \|
\,
| \nabla u | \, \|_{ L^2_\lambda  ( \tilde\Omega )} \,
\end{equation}
be the norm in $ W^{1,2}_\lambda  ( \tilde\Omega ) $.
Let $ C^\infty_0 (\tilde\Omega )$ denote the
set of infinitely differentiable functions with compact support in $\tilde\Omega$
and let $ W^{1 ,2}_{\lambda,0} ( \tilde\Omega )$ denote the closure of
$ C^\infty_0 (\tilde\Omega )$ in the norm $\| \cdot\|_{ W^{1,2}_\lambda  (\tilde\Omega)}$.
$ W^{1,2}_{\lambda,{loc}} ( \tilde\Omega ) $ is defined in the standard way.
Given $t_1<t_2$, let $L^2(t_1,t_2,W^{1,2}_\lambda  ( \tilde\Omega ))$ denote the space of functions such that
for almost every $t$, $t_1\leq t\leq t_2$,
the function $x\to u(x,t)$ belongs to
$W^{1,2}_\lambda ( \tilde\Omega )$ and
\begin{equation}
\| u \|_{ L^2(t_1,t_2,W^{1,2}_\lambda  ( \tilde\Omega ))}:=\biggl (\int\limits_{t_1}^{t_2}\int\limits_{\tilde\Omega}\biggl (|u(X,t)|^2+|\nabla u(X,t)|^2\biggr )\lambda(X)dXdt\biggr )^{1/2} <\infty.
\end{equation}
The space
$L^2\left(t_1,t_2,W^{1,2}_{\lambda,loc}(\tilde\Omega)\right)$ is defined analogously.
Let $\mathcal{H}$ be as in  \eqref{lop}, assume \eqref{lop+} and \eqref{aabb}. A function $u$ is said to be a weak solution of $\mathcal{H}u=0$ in $ \tilde\Omega_T$ if,
for all open sets $\tilde\Omega'\subseteq \tilde\Omega$ and  $0<t_1<t_2<T$,
we have $  u \in L^2(t_1,t_2,W^{1,2}_\lambda ( \tilde\Omega' ))$  and
\begin{eqnarray}\label{1.1}
&&
\int\limits_{t_1}^{t_2}\int\limits_{\tilde\Omega'}a_{ij}(X)\partial_{x_i}u\partial_{x_j}\theta dXdt
-
\int\limits_{t_1}^{t_2}\int\limits_{\tilde\Omega'}u\partial_t\theta \lambda dXdt\notag
\\
&&
+
\int\limits_{\tilde\Omega'}u(X,t_2)\theta(x,t_2) \lambda dX
-
\int\limits_{\tilde\Omega'}u(X,t_1)\theta(x,t_1) \lambda dX
=0
\end{eqnarray}
whenever $ \theta  \in C_0^\infty(\tilde\Omega'_T)$.
Furthermore, $u$ is said to be a \emph{weak supersolution}
to $\mathcal{H}u=0$ if the left hand side of \eqref{1.1} is non-negative for all
$\theta \in  C_0^\infty(\tilde\Omega'_T)$ with $\theta\geq 0$.
If instead the left hand side is non-positive $u$ is said to be a \emph{weak subsolution}.

\subsection{Proof of Theorem \ref{thm1}}
Recall the function $\Gamma_{x_{n+1}}$ introduced in \eqref{aah}. Then to start the proof of Theorem \ref{T:quotients} we first observe, for ${x_{n+1}}>0$ fixed, that
extending $\Gamma_{x_{n+1}}$ by $0$ for $t<0$ we have
           \begin{eqnarray}\label{aa++}
     \int_{-\infty}^\infty\int_{\mathbb R^n}\Gamma_{x_{n+1}}(x,t)\, dxdt&=&\int_{0}^\infty\frac {1}{4^s\Gamma(-s)}{x_{n+1}}^{1-a}t^{-1-s}\exp(-|{x_{n+1}}|^2/(4t))\, dt\notag\\
     &=&\int_{0}^\infty\frac {1}{4^s\Gamma(-s)}t^{-1-s}\exp(-1/(4t))\, dt=1,
     \end{eqnarray}
     as we see by a change of variables. Furthermore,
      \begin{eqnarray}\label{aa++q}
      \Gamma_{x_{n+1}}(x,t)\to\delta_{(0,0)}(x,t)\mbox{ as }{x_{n+1}}\to 0,
      \end{eqnarray}
      where $\delta_{(0,0)}(x,t)$ is the Dirac delta at $(0,0)$,
      and hence
          \begin{eqnarray}\label{aa++q+}
      u(x,x_{n+1},t)\to f(x,t) \mbox{ as }{x_{n+1}}\to 0.
      \end{eqnarray}
      To continue we note, whenever $(x,t)\neq (0,0)$, ${x_{n+1}}>0$, that
            \begin{eqnarray}\label{aa++a}
\Delta\Gamma_{x_{n+1}}(x,t)&=&\Gamma_{x_{n+1}}\biggl (\frac {|x|^2}{(2t)^2}-\frac n {2t}\biggr ),\notag\\
\partial_t\Gamma_{x_{n+1}}(x,t)&=&\Gamma_{x_{n+1}}\biggl (-(n/2+1+s)t^{-1}+\frac {(|x|^2+{x_{n+1}}^2)}{(2t)^2}\biggr ).
\end{eqnarray}
Furthermore,
            \begin{eqnarray}\label{aa++b}
\partial_{x_{n+1}}\Gamma_{x_{n+1}}(x,t)&=&\Gamma_{x_{n+1}}\biggl (\frac {(1-a)}{x_{n+1}}-\frac {{x_{n+1}}}{2t}\biggr ),\notag\\
\partial^2_{x_{n+1}}\Gamma_{x_{n+1}}(x,t)&=&\Gamma_{x_{n+1}}\biggl (\frac {(1-a)}{x_{n+1}}-\frac {{x_{n+1}}}{2t}\biggr )^2+
\Gamma_{x_{n+1}}\biggl (\frac {(a-1)}{{x_{n+1}}^2}-\frac {1}{2t}\biggr ).
\end{eqnarray}
To be careful we note that
            \begin{eqnarray*}\label{aa++b+}
a{x_{n+1}}^{a-1}\partial_{x_{n+1}}\Gamma_{x_{n+1}}(x,t)&=&\Gamma_{x_{n+1}}\biggl ({a(1-a)}{x_{n+1}}^{a-2}-\frac{a{x_{n+1}}^a}{2t}\biggr ),\notag\\
{x_{n+1}}^a\partial^2_{x_{n+1}}\Gamma_{x_{n+1}}(x,t)&=&\Gamma_{x_{n+1}}\biggl ({(1-a)^2}{x_{n+1}}^{a-2}+\frac {{x_{n+1}}^{a+2}}{({2t})^{2}}-\frac{{x_{n+1}}^a(1-a)}t\biggr )\notag\\
&&+\Gamma_{x_{n+1}}\biggl ({(a-1)}{x_{n+1}}^{a-2}-\frac{{x_{n+1}}^a}{2t}\biggr ).
\end{eqnarray*}
Hence,
            \begin{eqnarray*}\label{aa++b+}
{x_{n+1}}^a\partial^2_{x_{n+1}}\Gamma_{x_{n+1}}(x,t)+a{x_{n+1}}^{a-1}\partial_{x_{n+1}}\Gamma_{x_{n+1}}(x,t)=\Gamma_{x_{n+1}}\biggl (\frac {{x_{n+1}}^{a+2}}{({2t})^{2}}+
\frac{(a-3){x_{n+1}}^a}{2t}\biggr ).
\end{eqnarray*}
Putting the calculations together we see that
            \begin{eqnarray*}\label{aa++b+a}
            {x_{n+1}}^a\partial_t\Gamma_{x_{n+1}}-\div({x_{n+1}}^a\nabla \Gamma_{x_{n+1}})
\end{eqnarray*}
equals
            \begin{eqnarray*}\label{aa++b+a}
         &&\Gamma_{x_{n+1}}\biggl (-(n/2+1+s)\frac {{x_{n+1}}^a}t+{x_{n+1}}^a\frac {(|x|^2+{x_{n+1}}^2)}{(2t)^2}\biggr )\notag\\
            &&-\Gamma_{x_{n+1}}\biggl (\frac {{x_{n+1}}^{a+2}}{({2t})^{2}}+
\frac{(a-3){x_{n+1}}^a}{2t}\biggr )\notag\\
&&-\Gamma_{x_{n+1}}{x_{n+1}}^a\biggl (\frac {|x|^2}{(2t)^2}-\frac n {2t}\biggr )\notag\\
&=&-\Gamma_{x_{n+1}}\frac{{x_{n+1}}^a}t\biggl (n/2+1+s+\frac{(a-3)}{2}-n/2\biggr )\notag\\
&=&-\Gamma_{x_{n+1}}\frac{{x_{n+1}}^a}t\biggl (n/2+1+s-1-s-n/2\biggr )=0.
\end{eqnarray*}
In particular, we can conclude that
\begin{eqnarray*}\label{aa++b+a}
            {x_{n+1}}^a\partial_tu(X,t)-\div({x_{n+1}}^a\nabla u(X,t))&=&0,\ \quad\quad (X,t)\in\mathbb R^{n+2}_+,\notag\\
            u(x,0,t)&=&f(x,t),\ (x,t)\in\mathbb R^{n+1}.
\end{eqnarray*}
We now consider the limit of $-{x_{n+1}}^a\partial_{x_{n+1}} u(X,t)$ as ${x_{n+1}}\to 0$. Indeed, we see that
        \begin{eqnarray}\label{aa++q+}
      -{x_{n+1}}^a\partial_{x_{n+1}} u(X,t)\biggl |_{x_{n+1}=0},
      \end{eqnarray}
      equals
       \begin{eqnarray}\label{aa++q+a}
      &&-\lim_{{x_{n+1}}\to 0} \frac {u(X,t)-u(x,0,t)}{{x_{n+1}}^{1-a}}\notag\\
            &=&\lim_{{x_{n+1}}\to 0}\int_{-\infty}^t\int_{\mathbb R^n}(f(x,t)-f(x',t')){x_{n+1}}^{a-1}\Gamma_{x_{n+1}}(x-x',t-t')\, dx'dt'\notag\\
            &=&\frac {1}{4^s\Gamma(-s)}\int_{-\infty}^t\int_{\mathbb R^n}\frac {(f(x,t)-f(x',y'))}{(t-t')^{1+s}}W(x-x',t-t')\, dx'dt'\notag\\
            &=&4^{-s}(\partial_t-\Delta)^sf(x,t).
      \end{eqnarray}
      This completes the proof of the first part of the theorem. To proceed we introduce
\begin{eqnarray}  \label{pol3aa+}
Q_r(X,t)=C_r(x,t)\times(x_{n+1}-r,x_{n+1}+r),
\end{eqnarray}
whenever $(X,t)=(x,x_{n+1},t)\in\mathbb R^{n+2}$ and $r>0$, and we let $\tilde u$ be as stated in the theorem. Let $\psi\in C_0^\infty(Q_r(\tilde x,0,\tilde t))$. We want to prove that
         \begin{eqnarray*}\label{aa++b+a}
         \int_{\mathbb R^{n+2}}|x_{n+1}|^a\bigl(-\tilde u(X,t)\partial_t\psi(X,t)+\nabla \tilde u(X,t)\cdot\nabla \psi(X,t)\bigr )\, dXdt=0.
\end{eqnarray*}
We note that
         \begin{eqnarray*}\label{aa++b+a}
         &&\int_{\mathbb R^{n+2}}|x_{n+1}|^a\bigl(-\tilde u(X,t)\partial_t\psi(X,t)+\nabla \tilde u(X,t)\cdot\nabla \psi(X,t)\bigr )\, dXdt\notag\\
         &=&\int_{Q_r(\tilde x,0,\tilde t)\setminus\{|x_{n+1}|<\epsilon\}}|x_{n+1}|^a\bigl(-\tilde u(X,t)\partial_t\psi(X,t)+\nabla \tilde u(X,t)\cdot\nabla \psi(X,t)\bigr )\, dXdt\notag\\
         &&+\int_{Q_r(\tilde x,0,\tilde t)\cap\{|x_{n+1}|<\epsilon\}}|x_{n+1}|^a\bigl(-\tilde u(X,t)\partial_t\psi(X,t)+\nabla \tilde u(X,t)\cdot\nabla \psi(X,t)\bigr )\, dXdt\notag\\
         &=:&I_1+I_2.
\end{eqnarray*}
By local integrability of $|x_{n+1}|^a|\tilde u(X,t)|^2$ and $|x_{n+1}|^a|\nabla \tilde u(X,t)|^2$ we have that $I_2\to 0$ as $\epsilon\to 0$. Furthermore, using the equation we see that
   \begin{eqnarray*}\label{aa++b+a}
         I_1&=& \int_{Q_r(\tilde x,0,\tilde t)\setminus\{|x_{n+1}|<\epsilon\}}\bigl (|x_{n+1}|^a\partial_t\tilde u(X,t)-\div(|x_{n+1}|^a\nabla \tilde u(X,t))\bigr )\psi(X,t)\, dXdt\notag\\
         &&-2\int_{Q_r(\tilde x,0,\tilde t)\setminus\{|x_{n+1}|=\epsilon\}}|\epsilon|^a\partial_{x_{n+1}}u(x,\epsilon,t)\partial_{x_{n+1}}\psi(x,\epsilon,t)\, dxdt\notag\\
         &=&-2\int_{Q_r(\tilde x,0,\tilde t)\setminus\{|x_{n+1}|=\epsilon\}}|\epsilon|^a\partial_{x_{n+1}}u(x,\epsilon,t)\partial_{x_{n+1}}\psi(x,\epsilon,t)\, dxdt.
\end{eqnarray*}
Hence also
   \begin{eqnarray*}\label{aa++b+a}
         I_1&=&-2\int_{Q_r(\tilde x,0,\tilde t)\setminus\{|x_{n+1}|=\epsilon\}}|\epsilon|^a\partial_{x_{n+1}}u(x,\epsilon,t)\partial_{x_{n+1}}\psi(x,\epsilon,t)\, dxdt\to 0
\end{eqnarray*}
as $\epsilon\to 0$ based on the assumption that       \begin{eqnarray*}\label{aa++q+aa}
      (\partial_t-\Delta)^sf(x,t)=0\mbox{ whenever }(x,t)\in C_r(\tilde x,\tilde t).
      \end{eqnarray*} This completes the proof of the second part of the theorem.

\section{Proof of Theorem \ref{T:quotients}}

Using Theorem \ref{thm1} we see that to prove Theorem \ref{T:quotients} it suffices to prove the following theorem.
\begin{theorem}\label{T:quotients.old}
Let $\mathcal{H}$ be as in  \eqref{lop}, assume \eqref{lop+} and \eqref{aabb}.
Let $\tilde\Omega\subset\mathbb R^{n+1}$ be a bounded
Lipschitz domain
with parameters $M$, $r_0$
and let $\tilde\Omega_T=\tilde\Omega\times (0,T)$ for some $T>0$.
Let $u, v$ be non-negative
solutions of $\mathcal{H}u=0$ in $\tilde\Omega_T$ vanishing continuously on $\tilde S_T$.
Let $\delta$, $0<\delta<r_0$, be a fixed constant.
Then $u/v$ is H{\"o}lder continuous
on the closure of $\tilde\Omega\times(\delta^2,T]$. Furthermore, let $(X_0,t_0)\in \tilde S_T$, $\delta^2\leq t_0$, and assume that $r<\delta/2$. Then there exist  $c=c(\mathcal{H},M,\diam(\tilde\Omega),T,\delta)$, $1\leq c<\infty$, and $\alpha=\alpha(\mathcal{H},M,\diam(\tilde\Omega),T,\delta)$, $\alpha\in (0,1)$, such that
\[  \biggl |\frac { u ( X,t ) }{ v ( X,t ) }-\frac { u ( Y,s ) }{ v ( Y,s ) }\biggr |
\leq c\biggl (\frac {|X-Y|+|t-s|^{1/2}}{r}\biggr )^\alpha\frac {u\bigl( A_r(X_0,t_0)\bigr) }{ v \bigl(A_r(X_0,t_0)\bigr) }\]
whenever $(X,t), (Y,s)\in \tilde\Omega_T\cap Q_{r/4}(X_0,t_0)$.
\end{theorem}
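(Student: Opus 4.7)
The plan is to follow the Fabes–Garofalo–Salsa (FGS) roadmap for the parabolic boundary Harnack principle, adapted to the $A_2$-degenerate setting. The available inputs from the literature are: the interior parabolic Harnack inequality of Chiarenza–Serapioni \cite{CS}, the Gaussian two-sided heat-kernel bounds of \cite{CUR}, and the Salsa-type comparison estimate of \cite{Is}. What must be supplied in addition are (a) solvability of the continuous Dirichlet problem for $\mathcal{H}$ in $\tilde\Omega_T$, and (b) H\"older continuity up to the lateral boundary $\tilde S_T$ for non-negative weak solutions of $\mathcal{H}u=0$ that vanish continuously on $\tilde S_T$. Once (a) and (b) are in place, the classical FGS argument goes through essentially verbatim.

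For (a), since $A=\{a_{ij}\}$ is time-independent and the $A_2$ hypothesis \eqref{aabb} makes the bilinear form $\int a_{ij}\partial_{x_i}u\,\partial_{x_j}v\, dX$ coercive on $W^{1,2}_{\lambda,0}(\tilde\Omega)$ (by the weighted Poincar\'e inequality of \cite{FKS}), Lax–Milgram combined with a Galerkin/semigroup construction produces weak solutions realizing arbitrary continuous parabolic boundary data. Continuity up to $\tilde S_T$ at each Lipschitz boundary point follows from a Wiener-type barrier built from a small power of the distance to $\partial\tilde\Omega$ together with the doubling of $\lambda$. For (b), one combines the De Giorgi–Nash–Moser oscillation-decay argument (valid in the $A_2$-degenerate parabolic setting by \cite{CS}) with the barrier from (a), obtaining quantitative H\"older estimates up to $\tilde S_T$ with constants depending only on $\mathcal{H}$, $M$, $\diam(\tilde\Omega)$, $T$, $\delta$.

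With (a) and (b) established, the FGS machinery is executed as follows. First, (b) combined with interior Harnack from \cite{CS} yields a Carleson estimate: any non-negative $u$ vanishing on a portion of $\tilde S_T$ is dominated on a forward parabolic box by its value at the nontangential corkscrew point $A_r(X_0,t_0)$. Next, the Salsa-type lemma of \cite{Is} produces a backward Harnack inequality along the nontangential approach region, so that both $u$ and $v$ are comparable to $u(A_r)$ and $v(A_r)$ respectively throughout a fixed parabolic corkscrew neighborhood of $(X_0,t_0)$. Finally, applying (b) iteratively to the auxiliary functions $u-c\,v$ on a geometric sequence of shrinking parabolic cylinders centered at $(X_0,t_0)$ produces the oscillation decay of $u/v$, from which the displayed H\"older estimate with the stated power $\alpha$ follows in a standard way.

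The main obstacle is the backward Harnack inequality: solutions to parabolic equations propagate forward in time, so one cannot directly compare $u(X,t)$ to $u(X,t-h)$. This is precisely where the Salsa lemma \cite{Is} is indispensable, and it relies crucially on the time-independence of the coefficients $a_{ij}$, which allows one to translate spatial comparisons into temporal comparisons through time-translation invariance. Once this step is secured, the remaining pieces (Carleson estimate, comparison of $u/v$ to ratios at $A_r$, and H\"older iteration) are routine adaptations of \cite{FGS} to the $A_2$-degenerate framework.
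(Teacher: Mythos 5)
Your high-level reduction matches the paper's exactly: since $A=\{a_{ij}\}$ is time-independent, the theorem follows from \cite{CS}, \cite{CUR} and the FGS machinery of \cite{FGS} once one establishes (a) solvability of the continuous Dirichlet problem and (b) H\"older continuity up to $\tilde S_T$. Your sketch of the FGS pipeline (Carleson estimate, backward Harnack via the Salsa-type lemma of \cite{Is}, oscillation decay of $u/v$) is also consistent with what the paper delegates to \cite{FGS}. Where your argument diverges — and where there is a real gap — is in how you propose to prove (a) and especially (b).

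For (a), the paper uses Perron's method and constructs an explicit barrier by exploiting time-independence: let $\psi$ solve the elliptic problem $\mathcal{L}\psi=-1$ in $\tilde\Omega$ with $\psi(X)=|X-X_0|$ on $\partial\tilde\Omega$ (solvable by \cite{FKS}), and set $\psi(X,t)=\psi(X)+(t_0-t)$ so that $\partial_t\psi=-1$ matches $\mathcal{L}\psi=-1$ and $\psi$ becomes a caloric barrier at $(X_0,t_0)$. Your proposed Lax--Milgram/Galerkin route is plausible for existence, but your continuity claim already leans on ``a Wiener-type barrier built from a small power of the distance,'' which you do not construct.

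For (b), this is where the gap becomes decisive. You propose combining the De Giorgi--Nash--Moser oscillation decay with that same power-of-the-distance barrier to conclude H\"older continuity up to $\tilde S_T$. But for the degenerate $A_2$-parabolic operator $\mathcal{H}$ in a Lipschitz cylinder, the existence of a supersolution behaving like $\dist(X,\partial\tilde\Omega)^\gamma$ with a quantitative exponent $\gamma>0$ is precisely the nontrivial content of boundary H\"older continuity, not an independently available tool — you would have to construct it, and nothing in \cite{CS} or \cite{FKS} hands it to you directly. The paper sidesteps this entirely with a flatten-and-reflect argument: after a bi-Lipschitz change of variables $\rho(x,x_{n+1},t)=(x,x_{n+1}+\phi(x),t)$ the transformed $\hat u$ solves an equation of the same $A_2$-degenerate type in $\{x_{n+1}>0\}$ and vanishes on $\{x_{n+1}=0\}$; one then does an even reflection across $\{x_{n+1}=0\}$ and applies \emph{interior} H\"older regularity (\cite{CS}, \cite{Is}) to the extended solution. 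This converts boundary regularity into interior regularity and eliminates the need for a quantitative barrier. To make your version of (b) rigorous you would either have to supply the power-barrier construction in the weighted setting or adopt the flatten-and-reflect reduction.
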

\begin{proof} As mentioned in the introduction, as $A=\{a_{ij}\}$ is independent of $t$ it is easily seen, using \cite{CS} and  \cite{CUR}, that Theorem \ref{T:quotients.old} follows from the by now classical arguments in \cite{FGS} once the continuous Dirichlet problem can be solved and once H{\"o}lder continuity up to the lateral boundary can be established. To solve the continuous Dirichlet problem one can use Perron's method. Indeed, for every $g\in C(\partial_p\Omega_T)$ there is a
unique Perron solution $u=u_g$ constructed in the usual way. To show that $u\in C(\overline{\Omega_T})$ one uses a barrier argument. In our case $A=\{a_{i,j}\}$ is independent of time and we can use the solvability of the Dirichlet problem in Lipschitz domains
for the degenerate elliptic operator
\[
\mathcal L=\sum_{i,j=1}^{n+1} \partial_{x_i}\left(a_{ij}\partial_{x_j}\right),
\]
see \cite{FKS}, to find a barrier at points $(X_0,t_0)\in \tilde S_T$. Indeed, let $\psi$ be the unique weak solution to the Dirichlet problem
$\mathcal L \psi =-1$ in $\tilde\Omega$ and $\psi(X)=|X-X_0|$ on $\partial \tilde \Omega$.
Then, using \cite{FKS} we have $\lim_{X\to X_0}\psi=0$ and $\lim_{X\to X'}\psi>0$
for all $X'\in \partial\Omega$, $X'\neq X_0$.
Let $\psi(X,t)=\psi(X)+(t_0-t)$, note that $\mathcal L \psi=-1=\partial_t \psi$ and thus that $\mathcal H \psi =0$. It follows that $\psi$ is a barrier at $(X_0,t_0)$
with respect to $\tilde \Omega\times(0,t_0)$. To establish  H{\"o}lder continuity up to $\tilde S_T$, assume that $u$ vanishes continuously on $\tilde S_T\cap Q_r(X_0,t_0)$ for some $(X_0,t_0)\in \tilde S_T$, $\sqrt{t_0}<r<r_0$. As  $\tilde \Omega$ is Lipschitz there exists a $\phi$ such that,
after a change of variables,
\[
\tilde\Omega_T \cap Q_r(X_0,t_0) = \{(x,x_{n+1},t)\in \R^{n+2} : x_{n+1}>\phi(x) \}\cap Q_r(X_0,t_0).
\]
Let $\rho(X,t)=\rho(x,x_{n+1},t)=(x,x_{n+1}+\phi(x),t)$
and note that there is a $c=c(M)$ such that
$\rho(\tilde Q_{r/c}(x_0,0,t_0))\subset  Q_r(X_0,t_0)$.
Denote $Q_{r/c}(x_0,0,t_0)$ by $Q^*$.
Let $\hat \lambda =\lambda\circ \phi$ and
$\hat u =u\circ \phi$. Then $\hat\lambda\in A_2$, with a constant depending only on $\Lambda$ and $M$,
and $\hat u$ is a solution to the equation
\[
\hat{\lambda} \partial_t u - \sum_{i,j=1}^{n+1} \partial_{x_i}(\hat{a}_{ij}\partial_{x_j} u)=0
\]
in $Q^*\cap \{x_{n+1}>0\}$, for a matrix valued function $\{\hat{a}_{ij}\}$ satisfying
\[
\hat\beta^{-1}\hat{\lambda} |\xi|^2
\le \sum_{i,j} \hat{a}_{ij} \xi_i\xi_j
\le \hat\beta \hat{\lambda} |\xi|^2,\ \xi\in \mathbb R^{n+1},
\]
in $Q^*\cap \{x_{n+1}>0\}$ and for some $\hat\beta=\hat\beta(n,\beta)$. As $u$ vanishes continuously on $Q_r(X_0,t_0)\cap\{x_{n+1}=\phi(x)\}$
it follows that $\hat u$ vanishes continuously on $Q^*\cap \{x_{n+1}=0\}$.
Reflecting the solution over $x_{n+1}=0$ in a standard manner, see \cite{Sa} and \cite{Is} for example, it follows that the extended function is a solution to a PDE of the same type. The H{\"o}lder regularity now follows from the corresponding interior
H{\"o}lder regularity of solutions, see \cite{CS} and \cite{Is}. We omit further details.
\end{proof}


\begin{thebibliography}{99}



\bibitem[ACV]{ACV}  M. Allen, L. Caffarelli \& A. Vasseur,
\emph{A Parabolic Problem with a Fractional-Time Derivative},
{\tt arXiv:1501.07211 [math.AP]}.

\bibitem[BFe]{BFe} Bucur \& Ferrari, \emph{An extension problem for the fractional derivative defined by Marchaud}, {\tt arXiv:1508.04156 [math.AP]}.

\bibitem[CSSi]{CSSi}
L. Caffarelli, S. Salsa \& L. Silvestre,
\emph{Regularity estimates for the solution and the free boundary of the obstacle problem for the fractional Laplacian},
Invent. Math. \textbf{171} (2008), no. 2, 425--461.

\bibitem[CSi]{CSi} L. Caffarelli \& L. Silvestre, \emph{An extension problem related to the fractional Laplacian},
Comm. Partial Differential Equations \textbf{32} (2007), no. 7--9, 1245--1260.

\bibitem[CFMS]{CFMS}
L. Caffarelli,  E. Fabes, S. Mortola \& S. Salsa, \emph{Boundary behaviour of nonnegative solutions of elliptic operators in divergence form}, Indiana J. Math. \textbf{30} (1981), 621--640.

\bibitem[CFi]{CFi} L. Caffarelli \& A. Figalli,
\emph{Regularity of solutions to the parabolic fractional obstacle problem},
J. f{\"u}r die reine und angewandte Mathematik \textbf{2013}, (2013), issue 680, 191--233.

\bibitem[CS] {CS} F. Chiarenza \& R. Serapioni,
{\em A remark on a Harnack inequality for degenerate parabolic equations},
Rend. Sem. Mat. Univ. Padova \textbf{73} (1985), 179--190.

\bibitem[CUR]{CUR} D. Cruz-Uribe, C. Rios,
\emph{Gaussian bounds for degenerate parabolic equations},
J. Functional Analysis \textbf{255} (2008), issue 2, 283--312.

\bibitem[FGS]{FGS}
E. Fabes, N. Garofalo \& S. Salsa,
\emph{A backward Harnack
inequality and Fatou theorem for nonnegative solutions of parabolic equations},
Illinois J. Math. \textbf{30} (1986), no. 4, 536--565.



\bibitem[FKS]{FKS}E. Fabes, C. Kenig \&  R. Serapioni,
{\em The local regularity of solutions to degenerate elliptic equations},
 Comm. Partial Differential Equations  {\bf 7} (1982), 77--116.

\bibitem[FJK]{FJK}  E. Fabes, D. Jerison \&  C. Kenig,
{\em The  Wiener test for degenerate elliptic equations},
Ann. Inst. Fourier (Grenoble)  {\bf 32}  (1982),  151--182.

\bibitem[FJK1]{FJK1}  E. Fabes, D. Jerison \& C. Kenig,
{\em Boundary behaviour of solutions to degenerate elliptic equations},
Conference on harmonic analysis in honor of Antonio Zygmund, Vol I, II Chicago, Ill, 1981, 577-589, Wadsworth Math. Ser, Wadsworth Belmont CA, 1983.

\bibitem[FS]{FS} E. Fabes \& M. Safonov,
\emph{Behavior near the boundary of positive solutions of second order parabolic equations},
J. Fourier Anal. Appl. \textbf{3} (1997), 871--882.

\bibitem[FSY]{FSY} E. Fabes, M. Safonov \& Y. Yuan,
\emph{Behavior near the boundary of positive solutions of second order parabolic equations.II},
Trans. Amer. Math. Soc. \textbf{351} (1999), 4947--4961.

\bibitem [FF]{FF} J. C. Fernandes \& B. Franchi,
{\em Existence and properties of the Green function for a class of degenerate parabolic equation},
Rev. Mat. Iberoamericana, {\bf 12 (2)} (1996), 491--524.

\bibitem[GL]{GL} N. Garofalo \& E. Lanconelli,
\emph{Wiener's criterion for parabolic equations with variable coefficients and its consequences},
Trans. Amer. Math. Soc. 308 (1988), no. 2, 811--836.

\bibitem[GW2]{GW2}  C. Gutierrez \& R. Wheeden,
{\em Mean value and Harnack inequalities for degenerate parabolic equations}, Colloquium Math. (Volume dedicated to A. Zygmund), {\bf 60/61} (1990), 157--194.

\bibitem[GW3]{GW3}  C. Gutierrez \& R. Wheeden,
{\em Harnack's inequality for degenerate parabolic equations},
 Comm. Partial Differential Equations  {\bf 16} (1991), 745--770.

\bibitem[Is]{Is} K. Ishige,
\emph{On the behaviour of degenerate parabolic equations},
Nagoya math J. \textbf{155} (1999), 1--26.

\bibitem[JK]{JK} D. Jerison \& C. Kenig,
\emph{Boundary behaviour of harmonic functions in nontangentially accessible domains},
Advances in Math. \textbf{46} (1982), 80--147.

\bibitem[KSVZ]{KSVZ} J. Kempiannen, J. Siljander, V. Vergara, R. Zacher,
\emph{Decay estimates for time-fractional and other non-local in time subdiffusion equations in $\mathbb{R}^d$}
, {\tt arXiv:1403.1737 [math.AP] }.

\bibitem[KSZ]{KSZ} J. Kempiannen, J. Siljander \& R. Zacher,
\emph{Representation of solutions and large-time behavior for fully nonlocal diffusion equations}
,	{\tt arXiv:1505.02803 [math.AP]}.

\bibitem[N]{N}
 K. Nystr\"{o}m, \emph{The Dirichlet problem for second order
parabolic operators}, Indiana Univ. Math. J. \textbf{46} (1997), 183--245.

\bibitem[NPS]{NPS}
 K. Nystr\"{o}m, H. Persson \& O.Sande,
\emph{Boundary estimates for solutions to linear degenerate parabolic equations},
to appear in Journal of Differential equations.

\bibitem[S]{S}
S. Salsa,
\emph{Some properties of nonnegative solutions of parabolic differential operators},
Ann. Mat. Pura Appl. (4) \textbf{128} (1981), 193--206.

\bibitem[Sa]{Sa} S. Samko,
\emph{Hypersingular integrals and their applications},
CRC Press (2001).

\bibitem[Si]{Si} L. Silvestre, \emph{Regularity of the obstacle problem for a fractional power of the Laplace operator},
Comm. Pure Appl. Math. \textbf{60} (2007), no. 1, 67--112.



\bibitem[ST]{ST} P. R. Stinga \& J. L. Torrea,
\emph{Regularity theory and extension problems for fractional nonlocal parabolic equations and the master equation}, {\tt arXiv:1511.01945 [math.AP]}.
\end{thebibliography}
    \end{document}